\newcommand{\E}{\mathds{E}}
\newcommand{\N}{\mathds{N}}
\newcommand{\1}{\mathds{1}}
\newcommand{\vol}{\operatorname{vol}}
\newcommand{\conv}{\operatorname{conv}}
\newcommand{\bd}{\operatorname{bd}}
\renewcommand{\phi}{\varphi}
\newtheorem{lemma}{Lemma}
\newtheorem{theorem}{Theorem}
\begin{document} 
\begin{center} \LARGE On the monotonicity of the moments of volumes of random simplices \normalfont\normalsize

\vspace{1cm}

Benjamin Reichenwallner and Matthias Reitzner

\vspace{0.1cm}

\textit{University of Salzburg and University of Osnabrueck}

\vspace{0.2cm}

\rule{5cm}{0.01cm}

\end{center}

\abstract{In a $d$-dimensional convex body $K$ random points $X_0, \dots, X_d$ are chosen. Their convex hull is a random simplex. The expected volume of a random simplex is monotone under set inclusion, if $K \subset L$ implies that the expected volume of a random simplex in $K$ is smaller than  the expected volume of a random simplex in $L$. Continuing work of Rademacher, it is shown that moments of the volume of random simplices are in general not monotone under set inclusion.}

\medskip\noindent
{\footnotesize 2000 AMS subject classification: Primary 60D05; Secondary 52A22.}\bigskip

\section{Introduction}

For a $d$-dimensional convex body $K$, denote the volume of the convex hull of $d+1$ independently uniformly distributed random points $X_0, \ldots, X_d$ in $K$ by $V_K$. Since the $d+1$ points are in general position with probability $1$, their convex hull is almost surely a full-dimensional simplex. Meckes~\cite{meck} asked whether the expected volume $\E V_K$ is monotone under inclusion, i.e. for each pair of convex bodies $K, L \subseteq \mathbb{R}^d$, $K \subseteq L$ implies 
\begin{equation}\label{conj:meckes-strong}
\E V_K \leq \E V_L. 
\end{equation}
He also states a weak conjecture concerning the existence of a universal constant $c>0$ such that $K \subseteq L$ implies 
$$\E V_K \leq c^d\, \E V_L.$$
Interest in this question comes from the fact that both conjectures would imply a positive solution to the slicing problem. In fact, the weak conjecture is an equivalent formulation, see e.g. \cite{rad}. For a more general statement of the conjecture we refer to \cite{reitz_surv}.

In this paper, we investigate the question for arbitrary moments of the volumes of random simplices. 
Let $K, L \subseteq \mathbb{R}^d$ denote convex bodies. 
For $d \in \N$, define $k_d$ as the critical exponent such that 
	\begin{enumerate}
		\item[(i)] $K \subseteq L$ implies $\E V_K^k \leq \E V_L^k $ for all $k < k_d$, and
		\item[(ii)] there exist $K \subseteq L$ with $\E V_K^k > \E V_L^k  $ for all $k \geq k_d$. %BR exists -> exist
	\end{enumerate}
	
At first it is unclear whether there is a critical exponent at all, where the behaviour switches from monotonicity to the existence of counterexamples precisely at  $k_d$. This issue will be a byproduct of the following results. And the interesting second question is whether $k_d < \infty$.

In 2012, Rademacher~\cite{rad} showed that Meckes' stronger conjecture \eqref{conj:meckes-strong} is not true in general.  (Note that the problem is trivial in dimension $1$ where $k_1=\infty$ and monotonicity holds.) More precisely, Rademacher proved the following, where the main point is the surprising existence of counterexamples, i.e. $k_d< \infty$: %BR: Komma nach 2012
	\begin{theorem}\label{main-Rademacher}
In the planar case, $3 \leq k_2  < \infty$, and in dimension three, $1 \leq k_3 < \infty$ holds. In higher dimension, $k_d=1$ holds for all $d \geq 4$.
	\end{theorem}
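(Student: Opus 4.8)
My plan is to collapse the whole question into a single pointwise comparison by means of a cap‑perturbation analysis, and then to read off the monotone regime and the counterexample regime as the two sides of one sign condition. Since any inclusion $K \subseteq L$ can be built up from finitely many infinitesimal cap additions, it suffices to compute the first variation of $\E V^k$ when a small region $\Delta$ of volume $\delta$ is glued to $K$ near a boundary point $p$ while preserving convexity. Writing $\E_K V^k = I(K)/\vol(K)^{d+1}$ with $I(K)=\int_{K^{d+1}}V^k$, and expanding both $I$ and $\vol$ to first order in $\delta$ (only the configurations with a single new vertex contribute at order $\delta$), I obtain
\[
\E_L V^k-\E_K V^k=\frac{(d+1)\,\delta}{\vol(K)}\bigl(h(p)-\E_K V^k\bigr)+O(\delta^2),
\]
where $h(p):=\E\bigl[V(p,X_1,\dots,X_d)^k\bigr]$ is the moment with one vertex pinned at $p$ and $X_1,\dots,X_d$ uniform in $K$. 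Hence monotonicity is equivalent to $h(p)\ge \E_K V^k$ for every body $K$ and every $p\in\bd K$, while a single strict reverse inequality yields a counterexample. Because $V(\cdot,x_1,\dots,x_d)=|\ell(\cdot)|$ is the modulus of an affine functional, convexity of $t\mapsto t^k$ for $k\ge 1$ makes $h$ convex in $p$; this, together with affine invariance, is the structural input I would use to locate the worst pair $(K,p)$.

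For the lower bounds (the monotone regime) the goal is $h(p)\ge \E_K V^k$ throughout the subcritical range: $k<3$ when $d=2$, and $k<1$ in every dimension. I would first normalize $K$ by affine invariance and then, using convexity of $h$ together with a symmetrization/extreme‑point argument, try to reduce the inequality to the extremal bodies, namely simplices and their degenerations. For the planar cutoff I would invoke the known closed forms for the area moments of a triangle whose vertices are uniform in a triangle, turning $h(p)\ge\E_K V^k$ into an explicit inequality in $k$; the value $k=3$ should emerge as the precise exponent at which this explicit inequality first fails for the extremal configuration. For $k<1$ in higher dimensions convexity is unavailable, but a softer, non‑explicit estimate should suffice, since the small exponent weakens the dilution enough that the domain‑expansion term dominates.

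For the upper bounds (the counterexamples) I must exhibit $K$ and $p\in\bd K$ with $h(p)<\E_K V^k$. Round bodies are useless: for the ball $h$ is radial and increasing in $|p|$, so its boundary value exceeds its average and monotonicity holds. The candidates are therefore strongly anisotropic — flat or elongated — bodies, where pinning a vertex at a flat boundary point confines the simplex while the bulk of the mass of $V^k$ is produced by other configurations. Concretely I would take a one‑parameter family (cones or double cones of varying height, or a slab‑like body), compute $\E V^k$ and $h(p)$ by integrating out the $d$ free vertices with the affine Blaschke–Petkantschin formula, and show that $h(p)-\E_K V^k$ turns negative once $k$ is large for $d=2,3$, and already at $k=1$ for $d\ge 4$; tracking how the exponent of the degeneration parameter depends on $d$ should be what forces the first moment itself to fail when $d\ge 4$. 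To certify that a genuine threshold $k_d$ exists I would arrange the family so that, for the extremal body, the sign of $h(p)-\E_K V^k$ as a function of $k$ changes at most once — the dilution factor $\vol(L)^{-(d+1)}$ growing in relative weight with $k$ — so that the counterexamples form an up‑set $[k_d,\infty)$ and monotonicity a down‑set, giving the intervals $[3,\infty)$, $[1,\infty)$ and the exact value $1$.

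The main obstacle is the combination of the counterexample construction with the sharp constant in the lower bound: the reduction above is clean and purely formal, but the sign of $h(p)-\E_K V^k$ hinges on an explicit, genuinely $d$‑ and $k$‑dependent integral over the configuration space of the free vertices. It is exactly the delicate balance between the normalization $\vol(L)^{-(d+1)}$ and the enlarged integration domain that must be \emph{computed}, not merely estimated, in order to see the sign flip at $k=1$ for $d\ge 4$ and to pin the planar threshold at precisely $k=3$.
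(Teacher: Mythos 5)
Your first-variation computation is sound and recovers exactly the reduction the paper relies on (Lemma~\ref{equ}, due to Rademacher): monotonicity of $K\mapsto \E V_K^k$ is equivalent to $\E V_{K,x}^k\ge \E V_K^k$ for every body $K$ and every $x\in\bd K$. But beyond that reduction, the proposal does not actually prove any part of the statement, and the missing pieces are precisely where the content of the theorem lies. For the counterexamples, you propose cones, double cones, or slabs and say the relevant integrals ``must be computed, not merely estimated'' --- but you never compute them, and for such bodies the moments $\E V_K^k$ are not available in closed form in general dimension. The device that makes the computation possible is absent from your plan: one takes $K=B_d^+$ (a halfball) with $x=o$ the center of its flat face; then by the reflection symmetry argument (Lemma~\ref{symm}) $\E V_{B_d^+,o}^k=\E V_{B_d,o}^k$, which is known explicitly by Miles (Theorem~\ref{ball_origin}), while $\E V_{B_d^+}^k$ is bounded \emph{below} by the Blaschke--Groemer inequality (Theorem~\ref{bla_groe}) applied to a ball of the same volume, again explicit by Miles (Theorem~\ref{ball}). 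The ratio then reduces to an explicit quotient of ball volumes, controlled by Borgwardt's estimate, and one checks it is $<1$ for the claimed ranges of $k$ and $d$. Your heuristic ``round bodies are useless'' points you away from exactly this construction: the working example is half of a round body, chosen because its symmetry lets both quantities be evaluated.

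Two further gaps. First, the lower bound $k_2\ge 3$ requires proving $\E V_{K,x}^k\ge \E V_K^k$ for \emph{every} planar convex body and \emph{every} boundary point when $k=1,2$; convexity of $p\mapsto h(p)$ only yields that the maximum of $h$ over $\bd K$ exceeds the average $\E V_K^k$, not that every boundary value does, and your ``symmetrization/extreme-point reduction to simplices'' is not justified (the family of planar convex bodies has no such finite set of extreme points for this functional). This part of Rademacher's theorem is a separate, nontrivial argument that your sketch does not supply. Second, the existence of a genuine threshold $k_d$ (counterexamples forming an up-set in $k$) does not follow from a general principle about the normalization $\vol(L)^{-(d+1)}$; in the actual proof it is verified for the specific examples by showing the explicit bound $q(d,k)$ is eventually decreasing in $k$ and by tabulating the remaining finitely many moments. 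As it stands, your proposal establishes the correct equivalence but none of the three quantitative claims $3\le k_2<\infty$, $k_3<\infty$, $k_d=1$ for $d\ge 4$.
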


Our main theorem computes the constant $k_2$ and makes $k_3$ more precise.

\begin{theorem}\label{main}
In the planar case, $k_2 = 3$ holds. In dimension three, $k_3 \in\{ 1,2 \}$. %BR Beistrich und dimension three
\end{theorem}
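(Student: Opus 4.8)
The plan is to separate the two assertions and to observe that, by Theorem~\ref{main-Rademacher}, only the upper bounds remain: Rademacher already gives $k_2 \geq 3$ and $k_3 \geq 1$, so it suffices to exhibit pairs $K \subseteq L$ with $\E V_K^k > \E V_L^k$ for $k = 3$ in the plane and for $k = 2$ in $\R^3$, and to show that such counterexamples persist for every larger exponent. The persistence is precisely what turns the critical exponent into a well-defined number, thereby also settling the well-definedness issue raised in the introduction; I expect it to come out of the sign analysis of a single difference function $k \mapsto \E V_L^k - \E V_K^k$ rather than from a separate argument.

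The main tool I would use is the first variation of the normalized moment under an outward perturbation of the boundary. Writing $\E V_K^k = \vol(K)^{-(d+1)} \int_{K^{d+1}} \vol(\conv(x_0,\dots,x_d))^k \, dx_0 \cdots dx_d$ and pushing $\bd K$ outward with nonnegative speed $h$ concentrated near a point $p \in \bd K$, the derivative of $\E V_K^k$ has the sign of
\begin{equation*}
 \E\!\left[\vol(\conv(p, X_1,\dots,X_d))^k\right] - \E\!\left[\vol(\conv(X_0,\dots,X_d))^k\right],
\end{equation*}
that is, of the difference between the moment with one vertex pinned at $p$ and the free moment. Thus a local counterexample to monotonicity at exponent $k$ exists as soon as some boundary point $p$ makes the pinned moment strictly smaller than the free one. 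The natural candidate for $K$ is the regular simplex and for $p$ the centre of a facet (an edge midpoint when $d=2$): pinning a vertex there caps the attainable simplex volume well below the global maximum, so the upper tail of the pinned volume is strictly lighter, forcing the pinned moment below the free one for all sufficiently large $k$.

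Concretely, I would reduce both moments to integrals of powers of a determinant over the simplex and evaluate them in closed form, e.g.\ via the Blaschke--Petkantschin formula, obtaining two explicit functions of the real parameter $k$. Their difference is negative for large $k$ and, by Rademacher's lower bound, nonnegative for $k < 3$ in the planar case; the crux is to show that the unique sign change sits exactly at $k = 3$, and that at the threshold one still has a genuine finite (not merely infinitesimal) pair $K \subseteq L$ realizing strict inequality, since the first variation vanishes there. I expect the single-crossing property---hence both the persistence for $k \geq 3$ and the well-definedness of $k_2$---to follow from the fact that the pinned and free volumes are ordered in their upper tails, so that their moment curves cross at most once.

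The same scheme applies in $\R^3$, where the analogous computation should yield a counterexample at $k = 2$ and therefore $k_3 \leq 2$; combined with $k_3 \geq 1$ this gives $k_3 \in \{1,2\}$. The main obstacle, and the reason the three-dimensional value is not pinned down, is the evaluation and sign determination of the moment integrals: the determinant-power integrals over the simplex are markedly harder in dimension three, and at the decisive exponent $k = 1$ the resulting expressions appear too close to decide whether enlarging can strictly decrease the expected volume. Settling that single case is exactly what would separate $k_3 = 1$ from $k_3 = 2$.
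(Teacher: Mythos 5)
Your reduction is the right one: the ``first variation'' criterion you describe is exactly Rademacher's Lemma~\ref{equ}, and the paper likewise only needs to exhibit, for each $k\geq 3$ (resp.\ $k\geq 2$), a convex body $K$ and a boundary point $p$ with $\E V_{K,p}^k<\E V_K^k$. Your candidate for $d=2$ --- a triangle with $p$ the midpoint of an edge --- is also the paper's choice for $k=3,\dots,10$, and at $k=3$ the inequality is strict ($\E V_{T,x}^3/\E V_T^3=24/31$), so your worry about the variation vanishing at the threshold is unfounded (it is at $k=2$ that the ratio equals $1$ exactly). The genuine gaps are elsewhere. First, the persistence step: you need a counterexample for \emph{every} $k\geq 3$, and you propose to get this from a ``single-crossing'' of the two moment curves, justified only by the remark that the pinned volume has a lighter upper tail. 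That is not a proof: moment sequences of two random variables do not cross at most once merely because one variable is bounded by a smaller constant, and a rigorous single-crossing argument would require controlling the distribution functions of $V_{T,x}$ and $V_T$, which you do not have. The paper avoids this entirely by using two constructions: for the halfball $B_2^+$ with $p$ the centre of the base, Miles' exact formulas plus the Blaschke--Groemer inequality give an explicit upper bound $q(2,k)^{1/2}$ on the ratio $\E V_{B_2^+,o}^k/\E V_{B_2^+}^k$, and a ratio test shows $q(2,k)<1$ for all $k\geq 11$; the remaining cases $k=3,\dots,10$ are settled by the closed-form evaluation of $\E V_{T,x}^k$ (Theorem~\ref{triangle}, proved via Blaschke--Petkantschin) against the Reed--Alagar formula for $\E V_T^k$. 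All of the actual work sits in these explicit evaluations, which your proposal defers. Relatedly, a single pair whose difference function changes sign once does not by itself establish that $k_d$ is well defined; one needs counterexamples for all large $k$ together with Rademacher's monotonicity below the threshold.

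Second, your route in dimension three is the one the paper explicitly cannot carry out: for a tetrahedron with $p$ the centroid of a facet, $\E V_{T,p}^k$ is not known in closed form even for $k=1$ (the paper only has numerics there), so ``the analogous computation should yield a counterexample at $k=2$'' is not available. The paper instead takes $K=B_3^+$ with $p=o$: the bound $q(3,k)<1$ handles $k\geq 4$, and for $k=2,3$ one evaluates the exact ratio $2^k(\kappa_3/\kappa_{k+3})(\kappa_{4(k+3)}/\kappa_{3(k+4)})$ from \eqref{eq:ratio}, which equals $1$ at $k=2$ (sufficient, since the Blaschke--Groemer inequality entering \eqref{eq:ratio} is strict for the halfball) and $0.384\ldots$ at $k=3$. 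Without either the halfball computation or an actual evaluation of the tetrahedron integrals, your argument for $k_3\leq 2$ does not close.
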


The reader might recognize that there is still one open task, namely to prove $k_3=1$, i.e. to disprove monotonicity of the expected volume of a random tetrahedron in dimension three. Numerical simulations show that there is a counterexample (as already conjectered by Rademacher), but a rigorous proof is still missing. 

Since a direct proof of this issue is somewhat involved, one may use a very crucial lemma, stated here. In the following, we denote by $V_{K,x}$ the volume of a random simplex, which is the convex hull of a fixed point $x$ and $d$ independent uniform random points in $K$.

\begin{lemma}[Rademacher~\cite{rad}]\label{equ}
	For $k,d \in \N$, monotonicity under inclusion of the map 
	$$K \mapsto \E V_K^k,$$ 
	where $K$ ranges over all $d$-dimensional convex bodies, holds if and only if we have for each convex body $K \subseteq \mathbb{R}^d$ and for each $x \in \bd K$ that 
	$$\E V_K^k \leq \E V_{K,x}^k.$$
\end{lemma}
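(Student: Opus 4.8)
The plan is to prove the equivalence (Lemma \ref{equ}) in two directions. The key observation is that the expected volume $\E V_K^k$ can be computed by conditioning on the positions of the points, and that moving a single point along a path that enlarges the body should only increase the relevant moment. I would first set up notation: writing $X_0,\dots,X_d$ for the $d+1$ uniform points in $K$, I would express
\[
\E V_K^k = \frac{1}{\vol(K)^{d+1}} \int_K \cdots \int_K V(x_0,\dots,x_d)^k \, dx_0 \cdots dx_d,
\]
where $V(x_0,\dots,x_d)$ denotes the volume of the simplex they span. The idea is to isolate the role of one point, say $X_0$, by conditioning: $\E V_K^k = \E_{X_0}\big[\, \E V_{K,X_0}^k \,\big]$, so that the full moment is an average of the ``fixed-point'' moments $\E V_{K,x}^k$ over $x$ distributed uniformly in $K$.

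For the direction $(\Rightarrow)$, I would argue that if monotonicity under inclusion holds for the map $K\mapsto \E V_K^k$, then the pointwise inequality $\E V_K^k \le \E V_{K,x}^k$ must hold for every boundary point $x\in\bd K$. The plan here is to use an approximation/limiting argument: one considers the body $L$ obtained by adding to $K$ a thin spike or cone at the boundary point $x$, so that as the spike shrinks, $\E V_L^k$ tends to a convex combination that concentrates weight at $x$. Since $K\subseteq L$ forces $\E V_K^k \le \E V_L^k$ by hypothesis, taking the limit as the added region degenerates to the single point $x$ should yield $\E V_K^k \le \E V_{K,x}^k$. The delicate point is controlling the volume normalization $\vol(L)^{d+1}$ in the limit, since the added spike has vanishing volume; one has to take the derivative of $\E V_L^k$ with respect to the added mass at the appropriate rate.

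For the converse direction $(\Leftarrow)$, suppose the pointwise inequality holds for every convex body and every boundary point. Given $K\subseteq L$, I would interpolate between $K$ and $L$ by a one-parameter family of convex bodies $K_t$ with $K_0=K$ and $K_1=L$ (for instance via $K_t = \conv(K \cup (\text{growing portion of } L))$, or by a suitable sweeping of the boundary), and show that $t\mapsto \E V_{K_t}^k$ is nondecreasing. The mechanism is that differentiating $\E V_{K_t}^k$ in $t$ picks up a boundary term involving exactly the quantity $\E V_{K_t,x}^k - \E V_{K_t}^k$ integrated over the advancing boundary, which is nonnegative precisely by the pointwise hypothesis. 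This reduces global monotonicity to the infinitesimal statement at each boundary point.

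The main obstacle I anticipate is making the variational/limiting argument rigorous in the $(\Rightarrow)$ direction: one must carefully handle the normalization by $\vol(\cdot)^{d+1}$ when perturbing $K$ by a set of small volume, and ensure the limit as the perturbation concentrates at $x$ genuinely recovers $\E V_{K,x}^k$ rather than some averaged quantity. A clean way to manage this is to compute the one-sided derivative $\frac{d}{dt}\big|_{t=0^+}\E V_{K_t}^k$ of the volume moment along a family that grows $K$ by attaching mass near $x$, and to recognize the leading-order term. Once this derivative formula is established, both directions follow from the same local computation, which is why the conditioning identity $\E V_K^k = \E_{X_0}\big[\E V_{K,X_0}^k\big]$ is the natural starting point.
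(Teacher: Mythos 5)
First, a point of reference: the paper does not prove this lemma at all --- it is quoted from Rademacher~\cite{rad} and used as a black box --- so there is no in-paper proof to compare against. Judged on its own, your plan has the right architecture, and it is essentially the standard first-variation argument behind Rademacher's lemma: for $K\subseteq L$ with $\vol(L\setminus K)=\epsilon$ small, split the integral over $L^{d+1}$ according to how many points fall into $L\setminus K$ to get
\begin{equation*}
\E V_L^k=\E V_K^k+\frac{d+1}{\vol K}\int_{L\setminus K}\bigl(\E V_{K,z}^k-\E V_K^k\bigr)\,dz+O(\epsilon^2),
\end{equation*}
and read both implications off this identity. Your conditioning identity $\E V_K^k=\E_{X_0}\bigl[\E V_{K,X_0}^k\bigr]$ is correct and is exactly the $j=1$ term of this expansion.

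The genuine gap is in your $(\Rightarrow)$ direction, and it is the obstacle you yourself flagged but did not resolve: the limit really is ``some averaged quantity'' rather than $\E V_{K,x}^k$ whenever $x$ is not an exposed point. Concretely, for $L_t=\conv(K\cup\{x+tv\})$ with $v$ an outer normal, the set $L_t\setminus K$ is the tent over the entire portion of $\bd K$ illuminated from $x+tv$; if $x$ lies in the relative interior of a facet $F$ --- which is precisely the situation in this paper's applications (midpoint of an edge of a triangle, centre of the flat face of the halfball) --- that illuminated region is all of $F$ for every $t>0$, and the normalized Lebesgue measure on $L_t\setminus K$ converges to a nondegenerate measure $\mu$ on $F$, not to $\delta_x$. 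Your argument then only yields $\E V_K^k\le\int_F\E V_{K,w}^k\,d\mu(w)$, from which the pointwise inequality at $x$ does not follow (Jensen goes the wrong way, since $z\mapsto\E V_{K,z}^k$ is convex, being the expectation of $|{\det}$ of an affine function of $z|^k$). A standard repair is to first prove the inequality for strictly convex bodies, where the illuminated region does shrink to $\{x\}$, and then pass to general $K$ and arbitrary $x\in\bd K$ by Hausdorff approximation together with continuity of $K\mapsto\E V_K^k$ and $(K,x)\mapsto\E V_{K,x}^k$. In the $(\Leftarrow)$ direction your sketch is sound, but two details need attention: the points of $K_{t+\epsilon}\setminus K_t$ lie strictly outside $K_t$, so the hypothesis (stated only on $\bd K_t$) must be propagated to nearby exterior points (either by uniform continuity, or more cleanly by the convexity of $z\mapsto\E V_{K_t,z}^k$), and the $O(\epsilon^2)$ error terms must be shown to vanish when summed over the interpolating chain, e.g.\ by reducing to one-point additions and refining the partition.
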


The lemma allows us to consider one convex body $K$, rather than a pair of convex bodies, and compute two different moments: the moment of the volume of a random simplex in $K$ as well as the same, but fixing one of the $d+1$ points to be a point on the boundary of $K$, denoted by $\bd K$. 

Rademacher takes $K$ to be a $d$-dimensional halfball and $x$ is the midpoint of the base which is a $(d-1)$-dimensional ball. 
These form the counterexamples to the monotonicity in Theorem~\ref{main-Rademacher} for all moments for $d \geq 4$, and to the monotonicity of all but finitely many moments for $d=2,3$. 

In the background of our Theorem~\ref{main}, there are a more detailed computation of Rademacher's counterexample in dimension $3$ and the construction of a new counterexample in dimension $2$. Here we have to compute the area of a random triangle in a triangle where one vertex is fixed at the midpoint of one edge. 
\begin{theorem}\label{triangle}
	Let $T \subseteq \mathbb{R}^2$ be a triangle and $x$ the midpoint of an edge of $T$. Then we have for the $k$-th moment of $V_{T,x}=
	\vol \conv (x, X_1,X_2)$:
	$$\frac{\E V_{T,x}^k}{\vol T^k} = \frac{2^{3-k}}{(k+1)(k+2)^2(k+3)} \left(\sum\limits_{l=1}^{k+1} \binom{k+2}{l}^{-1} + 1\right).$$
\end{theorem}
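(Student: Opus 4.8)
The plan is to exploit that the ratio $\E V_{T,x}^k/\vol T^k$ is invariant under affine maps (which preserve midpoints of edges), so I may fix a convenient triangle. I would take $T=\conv\{(-1,0),(1,0),(0,1)\}$, so that $x=(0,0)$ is the midpoint of the base and $\vol T=1$; then $V_{T,x}=\tfrac12|\det(X_1,X_2)|$ with $X_1,X_2$ independent and uniform on $T$, and $T=\{(u,v):0\le v\le 1-|u|\}$ is symmetric about the $v$-axis.

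Next I would pass to polar coordinates centred at $x$. Writing $X_i=r_i(\cos\theta_i,\sin\theta_i)$ one has $\det(X_1,X_2)=r_1r_2\sin(\theta_1-\theta_2)$ and $dX_i=r_i\,dr_i\,d\theta_i$, while the directions into $T$ fill $\theta_i\in[0,\pi]$. Since the radial variables decouple, integrating $r_i^{k+1}$ out to the boundary distance $\rho(\theta_i)=(\sin\theta_i+|\cos\theta_i|)^{-1}$ produces the factor $(k+2)^{-2}$ and leaves
$$\E V_{T,x}^k=\frac{2^{-k}}{(k+2)^2}\int_0^\pi\!\!\int_0^\pi \rho(\theta_1)^{k+2}\rho(\theta_2)^{k+2}\,|\sin(\theta_1-\theta_2)|^k\,d\theta_1\,d\theta_2.$$
This already explains the $(k+2)^2$ in the denominator of the claimed formula.

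The main obstacle is the coupling term $|\sin(\theta_1-\theta_2)|^k$, which prevents the double integral from factorising. I would resolve it with the substitution $t_i=\tan\theta_i$ on each of the four sub-squares cut out of $[0,\pi]^2$ by the lines $\theta_i=\pi/2$, using $\rho(\pi-\theta)=\rho(\theta)$ to fold the two halves together and $\sin(\theta_1-\theta_2)\ge0$ to drop the modulus after symmetrising. The point is that all $(1+t_i^2)$ factors coming from $\rho^{k+2}$, from $\sin^k$, and from $d\theta$ cancel exactly, reducing each piece to a rational integral: the two diagonal squares give $\int_0^\infty\!\int_0^\infty (1+t_1)^{-k-2}(1+t_2)^{-k-2}|t_1-t_2|^k\,dt_1\,dt_2$ and the two mixed squares the same with $|t_1-t_2|$ replaced by $t_1+t_2$. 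A further substitution $u_i=(1+t_i)^{-1}$ maps these onto $[0,1]^2$ and collapses them to
$$\int_0^1\!\!\int_0^1 |u_1-u_2|^k\,du_1\,du_2\qquad\text{and}\qquad \int_0^1\!\!\int_0^1 \bigl(u_1(1-u_2)+u_2(1-u_1)\bigr)^k\,du_1\,du_2.$$

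Finally I would evaluate these. The first is the elementary $2/((k+1)(k+2))$. For the second I would expand by the binomial theorem and integrate termwise, each factor giving a Beta integral $B(l+1,k-l+1)$, so that the sum reorganises into $\tfrac{1}{(k+1)^2}\sum_{l=0}^{k}\binom{k}{l}^{-1}$. Assembling the contributions with the prefactor $2^{1-k}(k+2)^{-2}$ yields the moment; the last step is a purely algebraic binomial identity rewriting $\frac{2}{(k+1)(k+2)}+\frac{1}{(k+1)^2}\sum_{l=0}^k\binom{k}{l}^{-1}$ in terms of $\sum_{l=1}^{k+1}\binom{k+2}{l}^{-1}+1$, bringing the answer into the stated closed form (a check at $k=1$ gives $5/54$ both ways). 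I expect the angular cancellation to be the crux, and the final binomial identity to be the most tedious bookkeeping.
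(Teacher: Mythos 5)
Your argument is correct, and every step checks out: the polar reduction gives exactly
$\E V_{T,x}^k=\frac{2^{-k}}{(k+2)^2}\int_0^\pi\int_0^\pi\rho(\theta_1)^{k+2}\rho(\theta_2)^{k+2}|\sin(\theta_1-\theta_2)|^k\,d\theta_1 d\theta_2$
with $\rho(\theta)=(\sin\theta+|\cos\theta|)^{-1}$; the powers of $(1+t_i^2)$ in the tangent substitution cancel because the total exponent is $(k+2)/2-k/2-1=0$; the substitution $u_i=(1+t_i)^{-1}$ yields the two unit-square integrals you name; and your closing identity is exactly two applications of the classical recurrence $S_n=\frac{n+1}{2n}S_{n-1}+1$ for $S_n=\sum_{l=0}^n\binom{n}{l}^{-1}$, since $\sum_{l=1}^{k+1}\binom{k+2}{l}^{-1}+1=S_{k+2}-1$. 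Your route is, however, genuinely different from the paper's. The paper applies the affine Blaschke--Petkantschin formula to integrate over the pencil of lines spanned by $X_1,X_2$, feeds in the one-dimensional moment $\E V_I^{k+1}=2l^{k+1}/((k+2)(k+3))$ for the chord, and then parametrizes lines by their axis intercepts $(a,b)$, splitting according to which vertex of $T$ the line cuts off; this produces Beta integrals such as $\int_0^1\int_0^1(a(1-b)+b(1-a))^k\,ab\;da\,db$, close cousins of yours but carrying the Jacobian factor $ab$ of the line parametrization. You instead exploit that one vertex of the random triangle is pinned at $x$, so $V_{T,x}=\frac12|\det(X_1,X_2)|$ and the radial variables decouple in polar coordinates centred at $x$ --- a Miles-type computation that avoids integral-geometric machinery altogether, at the price of the angular coupling you resolve by hand; the paper's decomposition has a more transparent geometric meaning (the three vertex-cutting regimes) and is symmetric in $X_1,X_2$. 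Two small points to tighten in a final write-up: the modulus is dropped only on the two folded mixed squares, where $\sin(\theta_1+\theta_2')\ge0$, while on the diagonal squares it survives as $|t_1-t_2|^k$ (your displayed integrals are the correct ones, so this is only a matter of wording); and the final binomial identity should be proved rather than asserted, e.g.\ via the recurrence above.
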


Coming back to the (lack of) monotonicity of the expected volume of a random tetrahedron in dimension $3$, it has already been conjectured by Rademacher that the above example, the halfball $B_3^+$ together with one point at the origin $o$ of its base, should also form a counterexample in this case. The value $\E V_{B_3^+,o}$  is known (and will be given in Section \ref{prel}), but for $\E V_{B_3^+}$, the precise value is an open task. Numerical computations show that
$$ 
0.028105 \approx \E V_{B_3^+}  > \E V_{B_3^+,o} = \frac{9\pi}{ 1\,024} = 0.0276\ldots . $$
A second counterexamle is given by a tetrahedron $T$ and $x$ the centroid of one of its facets. Here $\E V_T$ is known, but $\E V_{T,x}$ is missing. Again, by numerical integration we obtain
$$
0.0173\ldots  = \frac{13}{720}- \frac{\pi^2}{15\,015} =  \E V_T > \E V_{T,x} \approx 0.015901 . $$

This paper is organized in the following way. In Section~\ref{prel}, we give some auxiliary results and notation. Then we compute the moments of the area of a random triangle inside a triangle. This result will be used in Section~\ref{sec:main} for the proof of the main theorem, which is an extension of two theorems of Rademacher. 

As a general reference for the tools and results we need in the following, we refer to the book on Stochastic and Integral Geometry by Schneider and Weil \cite{schWeil}. More recent surveys on random polytopes are due to Hug \cite{hug_surv} and Reitzner \cite{reitz_surv}.

\section{Preliminaries}\label{prel}

In the following, we need some well-known results on random polytopes which we collect here for later use. We start with dimension one, where a convex set is an intervall $I$ and the volume $V_I$ of a random simplex is the distance between two random points.
\begin{lemma}[cf., e.g., \cite{sol}]\label{solomon}
	Assume $I$ is an intervall of length $l$. Then 
	$$\E V_I^k = \frac{2 l^k}{(k+1)(k+2)}.$$
\end{lemma}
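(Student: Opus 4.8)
The plan is to reduce immediately to the unit interval by a scaling argument and then evaluate the resulting moment by an elementary integral. Since $V_I = |X_1 - X_0|$ for two independent uniform points $X_0, X_1 \in I$, and since replacing an interval $I$ of length $l$ by $[0,1]$ rescales all distances by the factor $l$, we have $\E V_I^k = l^k\, \E |U_1 - U_0|^k$, where $U_0, U_1$ are independent and uniform on $[0,1]$. Thus it suffices to prove the identity for $l = 1$ and then multiply by $l^k$.

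First I would write the moment as the double integral
$$\E |U_1 - U_0|^k = \int_0^1 \int_0^1 |x - y|^k \, dx \, dy.$$
By the symmetry of the integrand under swapping $x$ and $y$, this equals twice the integral over the region $\{y \le x\}$, namely $2 \int_0^1 \int_0^x (x-y)^k \, dy \, dx$. The inner integral evaluates to $x^{k+1}/(k+1)$ via the substitution $u = x - y$, and integrating $x^{k+1}$ over $[0,1]$ then contributes the factor $1/(k+2)$. Collecting the constants yields $\frac{2}{(k+1)(k+2)}$, and reinstating $l^k$ from the scaling step gives the claim.

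An equally short alternative, which I would mention, is to use the law of the spacing directly: the difference $U_1 - U_0$ has the triangular density $1 - |t|$ on $[-1,1]$, so $|U_1 - U_0|$ has density $2(1-s)$ on $[0,1]$, and $\int_0^1 s^k \cdot 2(1-s)\, ds = 2\left(\tfrac{1}{k+1} - \tfrac{1}{k+2}\right)$ collapses to the same value. There is no genuine obstacle here, since the statement is a direct computation; the only two points deserving a line of justification are the scaling reduction and the symmetry argument that halves the domain of integration.
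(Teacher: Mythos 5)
Your proof is correct. Note that the paper itself offers no proof of this lemma at all---it is quoted as a known result with a reference to Solomon's \emph{Geometric Probability}---so your elementary derivation actually supplies what the paper omits. Both of your routes are sound: the scaling reduction $\E V_I^k = l^k \E|U_1-U_0|^k$ is immediate from the affine change of variables, the symmetrized double integral evaluates exactly as you say to $\tfrac{2}{(k+1)(k+2)}$, and the alternative via the triangular density $2(1-s)$ of $|U_1-U_0|$ gives the same value; either one alone would constitute a complete proof.
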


The only convex body for which the moments of random simplices are known in all dimensions is the unit ball $B_d$. They were computed by Miles \cite{mil}.
Let $\kappa_d = \vol B_d = \pi^{d/2}/\Gamma(1+d/2)$ be the volume of $B_d$, where $\Gamma(\cdot)$ denotes the gamma function, and let $\omega_d = \vol S_{d-1} = d \kappa_d$ be the $(d-1)$-dimensional volume of the boundary of $B_d$.

\begin{theorem}[cf. \cite{mil} or \cite{schWeil}, Theorem 8.2.3]\label{ball}
	For any $d,k\in \N$, we have
	$$\E V_{B_d}^k = \frac{1}{(d!)^k} \left(\frac{\kappa_{d+k}}{\kappa_d}\right)^{d+1} \frac{\kappa_{d(d+k+1)}}{\kappa_{(d+1)(d+k)}} \frac{\omega_1 \cdots \omega_k}{\omega_{d+1} \cdots \omega_{d+k}}.$$
\end{theorem}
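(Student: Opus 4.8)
The plan is to reduce the statement to the evaluation of a single multivariate integral and then to peel off one vertex at a time, exploiting the self-similarity of the ball under hyperplane sections. Writing $V_{B_d}=\frac{1}{d!}\bigl\lvert\det(X_1-X_0,\dots,X_d-X_0)\bigr\rvert$ and using that the $X_i$ are independent and uniform, we have
$$\E V_{B_d}^k=\frac{1}{(d!)^k\,\kappa_d^{\,d+1}}\int_{B_d}\!\cdots\!\int_{B_d}\bigl\lvert\det(x_1-x_0,\dots,x_d-x_0)\bigr\rvert^k\,dx_0\cdots dx_d .$$
The factor $(d!)^{-k}$ already accounts for the corresponding factor in the claimed formula, so the whole problem is the evaluation of the remaining $(d+1)$-fold integral over the ball.

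First I would single out the vertex $x_0$ and write $V_{B_d}=\frac1d\,\operatorname{dist}(x_0,H)\cdot A$, where $H=\operatorname{aff}(x_1,\dots,x_d)$ is the hyperplane through the other $d$ points and $A$ is the $(d-1)$-volume of their base simplex. Applying the affine Blaschke--Petkantschin formula (see \cite{schWeil}) to the $d$ points $x_1,\dots,x_d$ introduces the hyperplane $H$ as an integration variable together with the Jacobian $(d-1)!\,A$. The key geometric input is the self-similarity of the ball: a hyperplane $H$ at distance $t$ from the centre meets $B_d$ in a $(d-1)$-ball of radius $\sqrt{1-t^2}$, and conditionally on $H$ the points $x_1,\dots,x_d$ are uniform in this section. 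Hence the base integral reduces, after rescaling by the radius, to the $(k+1)$-st moment of the volume of a random simplex in the unit ball $B_{d-1}$, the exponent being raised from $k$ to $k+1$ by the Blaschke--Petkantschin Jacobian. Since this moment does not depend on $t$, it factors out, and one is left with a scalar recursion $\E V_{B_d}^k=C_{d,k}\,\E V_{B_{d-1}}^{k+1}$, where the constant $C_{d,k}$ collects the integral of $\operatorname{dist}(x_0,H)^k$ over $B_d$, the powers of the section radius, the hyperplane measure and all Blaschke--Petkantschin constants.

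Iterating this recursion down through the dimensions, the exponent increases by one at each step, so after $d-1$ reductions one reaches dimension one with exponent $k+d-1$, where the base case is supplied by Lemma~\ref{solomon} applied to an interval of length $2$, giving $\E V_{B_1}^{k+d-1}=2^{k+d}/\bigl((k+d)(k+d+1)\bigr)$. Collecting the factors $C_{d,k}$ produced along the way gives a product of Beta-type integrals; evaluating them in terms of Gamma functions and then compressing the resulting product by means of Legendre's duplication formula and Gauss's multiplication formula should reproduce the ratios $(\kappa_{d+k}/\kappa_d)^{d+1}$, $\kappa_{d(d+k+1)}/\kappa_{(d+1)(d+k)}$ and $\omega_1\cdots\omega_k/(\omega_{d+1}\cdots\omega_{d+k})$ in the claimed closed form, using $\kappa_m=\pi^{m/2}/\Gamma(1+m/2)$ and $\omega_m=2\pi^{m/2}/\Gamma(m/2)$.

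The main obstacle is twofold. At each peeling step one has to evaluate the integral of $\operatorname{dist}(x_0,H)^k$ over $B_d$ jointly with the section weight; because of the absolute value $\lvert\sigma-t\rvert^k$ in the apex coordinate this is a genuine Jacobi/Beta-type integral rather than a single Beta function, and carrying the Blaschke--Petkantschin constants and the normalisation $\kappa_d^{-(d+1)}$ correctly through the recursion is error-prone. The second and more delicate point is the final bookkeeping: the recursion naturally produces a long product of Gamma factors, and recognising this product as the compact combination of high-index $\kappa$'s and $\omega$'s stated in Theorem~\ref{ball} is where the real work lies. As a consistency check on the special-function identities involved, one may compare with the Gaussian model, in which the simplex volume equals $\frac{\sqrt{d+1}}{d!}\prod_{i=1}^d\chi_i$ in distribution with independent $\chi$-distributed factors, so that its moments form an explicit product of Gamma quotients of exactly the type that must emerge here.
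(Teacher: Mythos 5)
The paper does not actually prove this theorem: it is quoted from the literature (Miles~\cite{mil}; Schneider--Weil~\cite{schWeil}, Theorem 8.2.3), so there is no in-paper argument to compare yours against, and your proposal has to stand on its own.

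As a plan, your skeleton is sound and is essentially the classical recursive route: the base--height decomposition $V_{B_d}=\tfrac1d\operatorname{dist}(x_0,H)\,A$, the affine Blaschke--Petkantschin formula applied to $x_1,\dots,x_d$ (whose Jacobian is proportional to $A$, correctly raising the exponent from $k$ to $k+1$), the fact that $H\cap B_d$ is a $(d-1)$-ball of radius $\sqrt{1-t^2}$ so that $\E V_{B_{d-1}}^{k+1}$ factors out by scaling, and the base case $\E V_{B_1}^{k+d-1}=2^{k+d}/((k+d)(k+d+1))$ from Lemma~\ref{solomon}. The genuine gap is that the constants $C_{d,k}$ are never evaluated, and these constants \emph{are} the theorem: the statement to be proved is a closed-form identity, so a proof that defers exactly the closed-form evaluation has no verified content. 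Concretely, writing $\sigma=\langle x_0,u\rangle$ for the apex coordinate and $t$ for the offset of $H=H_{t,u}$, each step of your recursion requires the integral
\begin{equation*}
J_{d,k}=\int_{-1}^{1}\int_{-1}^{1}|\sigma-t|^{k}\,(1-\sigma^{2})^{\frac{d-1}{2}}\,(1-t^{2})^{\frac{(d-1)(d+k+1)}{2}}\,d\sigma\,dt,
\end{equation*}
which is not a product of Beta functions: the two weights have different exponents and the factor $|\sigma-t|^{k}$ couples the variables, so it is an asymmetric Selberg-type integral whose evaluation is precisely the step you flag as ``error-prone'' and ``where the real work lies'' without carrying it out. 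Until $J_{d,k}$ is computed and the resulting Gamma product is shown to telescope to $(\kappa_{d+k}/\kappa_d)^{d+1}\,\kappa_{d(d+k+1)}/\kappa_{(d+1)(d+k)}\cdot\omega_1\cdots\omega_k/(\omega_{d+1}\cdots\omega_{d+k})$, nothing has been proved. Also, the Gaussian $\prod\chi_i$ representation is only a heuristic check here and cannot be transplanted: for uniform points in the ball the successive Gram--Schmidt heights are not independent, unlike in the Gaussian model.
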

Observe that these values coincide with those from Lemma \ref{solomon} for $d=1$ and $l=2$.
It was proved by Blaschke (for $d=3$) and Groemer (for arbitrary $d$) that these values are extremal in the sense that under all convex bodies of volume one, $\E V_K^k$ is minimized for the ball.

\begin{theorem}[Blaschke-Groemer, cf.~\cite{schWeil}, Theorem 8.6.3]\label{bla_groe}
	Let $d,k\in \N$. Among all $d$-dimensional convex bodies, the map
	$$K \mapsto \frac{\E V_K^k}{\vol K^k}$$ 
	attains its minimum  if and only if $K$ is an ellipsoid.
\end{theorem}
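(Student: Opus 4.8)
The functional $K \mapsto \E V_K^k / \vol K^k$ is affine invariant: for an invertible affine map $A$, uniform points in $K$ map to uniform points in $AK$ and all volumes scale by $|\det A|$, so both numerator and denominator acquire the factor $|\det A|^k$. Hence it suffices to prove that the ball $B_d$ minimizes the functional and that every minimizer is an affine image of $B_d$. The plan is a Steiner symmetrization argument. Writing the moment as an integral,
$$\frac{\E V_K^k}{\vol K^k} = \frac{1}{(\vol K)^{d+1+k}} \int_{K^{d+1}} \vol\conv(x_0,\dots,x_d)^k \, dx_0\cdots dx_d,$$
I would fix a direction, say $e_d$, perform Steiner symmetrization in that direction, and decompose each $x_i = (y_i, t_i)$ with $y_i \in \R^{d-1}$ and $t_i$ ranging over the chord $I_i$ of $K$ above $y_i$.

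The key observation is that expanding the volume determinant along the last row,
$$\vol\conv(x_0,\dots,x_d) = \frac{1}{d!}\left| \det \begin{pmatrix} 1 & \cdots & 1 \\ y_0 & \cdots & y_d \\ t_0 & \cdots & t_d \end{pmatrix}\right| = \frac{1}{d!}\,|L(t_0,\dots,t_d)|,$$
where $L(t) = \sum_i c_i t_i$ is linear in $t$ with coefficients $c_i$ depending only on the $y_j$, and $\sum_i c_i = 0$ (equal heights give a degenerate simplex). Steiner symmetrization preserves the projection of $K$ and each chord length $\ell_i$, but recenters each chord at the origin; substituting $t_i = s_i + m_i$, with $m_i$ the old midpoint and $s_i \in I_i^* = [-\ell_i/2,\ell_i/2]$, turns $L(t)$ into $L(s) + C$ with $C = \sum_i c_i m_i$ constant in $s$. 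Since $L$ is odd on the symmetric box $\prod_i I_i^*$, pairing $s$ with $-s$ and using convexity of $v \mapsto |v|^k$ (Jensen gives $\tfrac12(|u+C|^k + |u-C|^k) \ge |u|^k$) yields
$$\int_{\prod_i I_i} |L(t)|^k\, dt \;\ge\; \int_{\prod_i I_i^*} |L(s)|^k\, ds.$$
Integrating over the common projections $y_i$ shows that the numerator does not increase under symmetrization while $\vol K$ is preserved, so the whole functional is non-increasing.

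Since every convex body admits a sequence of Steiner symmetrizations converging in the Hausdorff metric to a ball of the same volume, and the functional is continuous along such a sequence (the integrand is bounded and continuous and the domains converge, so dominated convergence applies), the ball value is a lower bound for every $K$. By the affine invariance noted above, every ellipsoid attains exactly this value, so the ellipsoids are minimizers.

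The remaining and most delicate step is uniqueness. A minimizer must realize equality in every symmetrization step. For $k>1$ the function $|v|^k$ is strictly convex, so the Jensen step forces $C = \sum_i c_i m_i = 0$ for almost every configuration $(y_0,\dots,y_d)$; for $k=1$ the same conclusion follows because $L(s)$ attains values of arbitrarily small modulus on $\prod_i I_i^*$. Using the two-equal-rows determinant identities $\sum_i c_i = 0$ and $\sum_i c_i y_i = 0$, one checks that $C \equiv 0$ forces the chord-midpoint map $y \mapsto m(y)$ to be affine, i.e. the midpoints of all chords in direction $e_d$ lie in a hyperplane. Requiring this for every direction, I would invoke the classical characterization that a convex body whose parallel-chord midpoints are coplanar in every direction is an ellipsoid. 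I expect this equality analysis to be the main obstacle: carefully justifying that the almost-everywhere condition $C\equiv 0$ propagates to affineness of the midpoint map in every direction, and then establishing (or citing) the midpoint-planarity characterization of ellipsoids.
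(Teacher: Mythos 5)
You are proving a statement that the paper itself does not prove: Theorem~\ref{bla_groe} is quoted as a classical result of Blaschke and Groemer, cited from \cite{schWeil} (Theorem 8.6.3), and used as a black box in Section~\ref{sec:main}. So there is no internal proof to compare against; the relevant comparison is with the classical proof in the cited literature, and your outline is in essence a reconstruction of exactly that argument (Groemer's Steiner-symmetrization proof). Its substance is correct: affine invariance reduces the problem to the ball; writing the simplex volume as $\frac{1}{d!}|L(t_0,\dots,t_d)|$ with $L$ linear in the heights and coefficients summing to zero is the right structural fact; recentering the chords and pairing $s$ with $-s$ reduces monotonicity under symmetrization to the convexity inequality $\tfrac12\left(|u+C|^k+|u-C|^k\right)\ge |u|^k$; and convergence of iterated symmetrizations to a ball, together with continuity of the functional (your dominated-convergence justification is adequate, since the boundary of a convex body is a null set), gives the inequality, with all ellipsoids attaining it by affine invariance.

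Your uniqueness plan is also the standard one and does work. Two remarks. First, a simplification you missed: $C=\sum_i c_i m_i$ is itself the determinant of the matrix whose last row is $(m_0,\dots,m_d)$, so $C=0$ is \emph{literally} the statement that the points $(y_i,m_i)$ are affinely dependent; the auxiliary identities $\sum_i c_i=0$ and $\sum_i c_i y_i=0$ are not needed for that step. Since the graph of the midpoint map projects onto the full $(d-1)$-dimensional projection of $K$, affine dependence of all $(d+1)$-tuples on the graph forces the graph into a hyperplane, i.e. the midpoint map is affine; the passage from almost every configuration to every configuration follows from continuity of the midpoint map on the relative interior of the projection. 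Second, your case distinction at the equality step is correctly calibrated: strict convexity handles $k>1$, and for $k=1$ the identity $\tfrac12(|u+C|+|u-C|)=\max(|u|,|C|)$ together with the fact that $L$ vanishes at $s=0$ forces $C=0$, as you say. The final appeal to the Bertrand--Brunn characterization (a convex body all of whose families of parallel chords have midpoints lying in a hyperplane is an ellipsoid) is a legitimate citation of a classical theorem. In short: your proposal is correct in outline, the delicate points are the ones you identify, and it coincides with the proof the paper delegates to \cite{schWeil}.
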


In the following, we need the expected volume of a random simplex where one point is fixed at the origin and the others are uniformly chosen in the unit ball. Again, this result is due to Miles.

\begin{theorem}[cf. \cite{mil} or \cite{schWeil}, Theorem 8.2.2]\label{ball_origin}
	For any $d, k\in \N$, we have
	$$\E V_{B_d,o} ^k = \frac{1}{(d!)^k} \left(\frac{\kappa_{d+k}}{\kappa_d}\right)^{d} \frac{\omega_1 \cdots \omega_k}{\omega_{d+1} \cdots \omega_{d+k}}.$$
\end{theorem}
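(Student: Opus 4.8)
The plan is to reduce the statement to a moment of a random determinant and then exploit the rotational invariance of the uniform distribution on $B_d$. Writing the $d$ random points as vectors $X_1,\dots,X_d$ emanating from the fixed vertex $o$, the simplex $\conv(o,X_1,\dots,X_d)$ has volume $V_{B_d,o}=\frac{1}{d!}\lvert\det(X_1,\dots,X_d)\rvert$, so that $\E V_{B_d,o}^k=\frac{1}{(d!)^k}\,\E\lvert\det(X_1,\dots,X_d)\rvert^k$. The absolute determinant is the volume of the parallelepiped spanned by $X_1,\dots,X_d$, and the classical base-times-height factorisation gives $\lvert\det(X_1,\dots,X_d)\rvert=\prod_{j=1}^d M_j$, where $M_j=\operatorname{dist}\bigl(X_j,\operatorname{span}(X_1,\dots,X_{j-1})\bigr)$ is the distance of the $j$-th point from the linear hull of the previous ones (with $M_1=\lvert X_1\rvert$).

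The key step is that the factors $M_1,\dots,M_d$ are \emph{independent}. Condition on $\mathcal F_{j-1}=\sigma(X_1,\dots,X_{j-1})$; almost surely $L_{j-1}:=\operatorname{span}(X_1,\dots,X_{j-1})$ is a genuine $(j-1)$-dimensional subspace, and $X_j$ is uniform on $B_d$ and independent of $\mathcal F_{j-1}$. Because the uniform law on $B_d$ is rotation invariant, the distribution of $\operatorname{dist}(X_j,L)$ for a \emph{fixed} subspace $L$ depends only on $\dim L$. Hence the conditional law of $M_j$ given $\mathcal F_{j-1}$ is the deterministic law of the distance of a uniform point to any fixed $(j-1)$-dimensional subspace; in particular $M_j$ is independent of $\mathcal F_{j-1}$, and inductively $M_1,\dots,M_d$ are mutually independent. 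Consequently $\E\lvert\det(X_1,\dots,X_d)\rvert^k=\prod_{j=1}^d \E M_j^k$.

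It remains to evaluate $\E M_j^k$, the $k$-th moment of the length of the orthogonal projection of a uniform point of $B_d$ onto a subspace of dimension $p:=d-j+1$. Splitting the coordinates into the $p$-dimensional complement and its orthogonal $(d-p)$-dimensional part, integrating out the latter over the corresponding ball slice, and then passing to polar coordinates in the $p$-dimensional part turns this into a single radial integral; the substitution $t=\rho^2$ produces a Beta function:
\[
\E M_j^k=\frac{\kappa_{d-p}\,\omega_p}{2\kappa_d}\,B\!\Bigl(\tfrac{k+p}{2},\tfrac{d-p+2}{2}\Bigr),\qquad p=d-j+1 .
\]
As $j$ runs from $1$ to $d$ the index $p$ runs through $1,\dots,d$, so $\E V_{B_d,o}^k=\frac{1}{(d!)^k}\prod_{p=1}^d\frac{\kappa_{d-p}\,\omega_p}{2\kappa_d}\,B\bigl(\tfrac{k+p}{2},\tfrac{d-p+2}{2}\bigr)$.

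The final—and most laborious—step is to verify that this product collapses to the asserted closed form $\frac{1}{(d!)^k}\bigl(\kappa_{d+k}/\kappa_d\bigr)^{d}\,\frac{\omega_1\cdots\omega_k}{\omega_{d+1}\cdots\omega_{d+k}}$. This is a pure Gamma-function identity: writing every $\kappa_n=\pi^{n/2}/\Gamma(1+n/2)$, every $\omega_n=2\pi^{n/2}/\Gamma(n/2)$, and each Beta factor as a ratio of Gamma values, one collects the powers of $\pi$ and of $2$ and telescopes the resulting products of $\Gamma$'s. I expect the bookkeeping of these Gamma quotients to be the main obstacle, whereas the probabilistic content is entirely contained in the independence argument above. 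As a cross-check, the case $p=d$ (i.e.\ $M_1=\lvert X_1\rvert$) gives $\E M_1^k=d/(k+d)$, matching the direct computation $\kappa_d^{-1}\omega_d\int_0^1 r^{k+d-1}\,dr$; in particular for $d=1$ the whole product reduces to the single factor $1/(k+1)=\E\lvert X_1\rvert^k$, as it must.
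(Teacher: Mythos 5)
Your proposal is correct, and it proves the statement by a genuinely different route than the paper, which in fact offers no proof at all: Theorem~\ref{ball_origin} is quoted there from Miles and from Schneider--Weil (Theorem 8.2.2), where the derivation rests on the \emph{linear} Blaschke--Petkantschin formula --- the same machinery the paper does employ, in affine form, to prove Theorem~\ref{triangle}. Your argument replaces that machinery by the base-times-height factorization $|\det(X_1,\dots,X_d)|=\prod_j M_j$ together with the observation that, by rotation invariance, the conditional law of $M_j$ given $X_1,\dots,X_{j-1}$ is a fixed law depending only on $j$; the mutual independence of the $M_j$ then follows correctly by induction. Note that this leans on the fact that every $\operatorname{span}(X_1,\dots,X_{j-1})$ passes through the fixed vertex $o$ and that $B_d$ is invariant under rotations about $o$; this is exactly why the same trick cannot reproduce Theorem~\ref{ball}, where no vertex is pinned at the centre, and why the integral-geometric proof is the one that generalizes. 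What your route buys is elementarity and self-containedness: one conditioning step plus a one-dimensional Beta integral per factor, and no integral geometry.

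The one step you left as expected ``bookkeeping'' does work out, and in fact it collapses factor by factor, so no serious bookkeeping is needed. Using $\kappa_n\Gamma\bigl(\tfrac n2+1\bigr)=\pi^{n/2}$ with $n=d-p$ and $n=d+k$, and $\Gamma\bigl(\tfrac n2\bigr)=2\pi^{n/2}/\omega_n$ with $n=p+k$, each factor simplifies as
\begin{align*}
\frac{\kappa_{d-p}\,\omega_p}{2\kappa_d}\,B\Bigl(\tfrac{k+p}{2},\tfrac{d-p+2}{2}\Bigr)
&=\frac{1}{\kappa_d}\cdot\kappa_{d-p}\Gamma\Bigl(\tfrac{d-p}{2}+1\Bigr)\cdot\frac{\omega_p}{2}\,\Gamma\Bigl(\tfrac{k+p}{2}\Bigr)\cdot\frac{1}{\Gamma\bigl(\tfrac{d+k}{2}+1\bigr)}\\
&=\frac{1}{\kappa_d}\cdot\pi^{(d-p)/2}\cdot\pi^{(k+p)/2}\,\frac{\omega_p}{\omega_{p+k}}\cdot\frac{\kappa_{d+k}}{\pi^{(d+k)/2}}
=\frac{\kappa_{d+k}}{\kappa_d}\cdot\frac{\omega_p}{\omega_{p+k}},
\end{align*}
all powers of $\pi$ cancelling. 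Taking the product over $p=1,\dots,d$ and using
$$\frac{\omega_1\cdots\omega_d}{\omega_{k+1}\cdots\omega_{k+d}}
=\frac{\omega_1\cdots\omega_k}{\omega_{d+1}\cdots\omega_{d+k}}$$
(both sides equal $\prod_{i=1}^{d}\omega_i\,\prod_{i=1}^{k}\omega_i\big/\prod_{i=1}^{d+k}\omega_i$), you obtain exactly the asserted formula, with the prefactor $(d!)^{-k}$ coming from your first step. Your cross-checks (the case $p=d$, and the reduction to $1/(k+1)$ for $d=1$) are also consistent with this.
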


Finally, we prove here a general lemma that has already been used by Rademacher~\cite{rad} in the case of the unit ball. It seems to be well-known, but we could not find a rigorous proof in the literature.  

\begin{lemma}\label{symm}
	Assume $K$ is a $d$-dimensional convex body which is symmetric with respect to $x \in K$ and let $H^+$ be a half-space containing $x$ on its boundary. Then 
	$$\E V_{K \cap H^+,x}^k = \E V_{K,x}^k.$$
\end{lemma}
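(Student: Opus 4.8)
The plan is to exploit the central symmetry of $K$ about $x$ to set up a pairing between simplices in $K \cap H^+$ and simplices in $K$. The key observation is this: by symmetry, $K$ decomposes as $K = (K \cap H^+) \cup (K \cap H^-)$ where $H^-$ is the opposite half-space, and the point reflection $\sigma$ through $x$ maps $K \cap H^+$ bijectively onto $K \cap H^-$, preserving volume. So when we pick $d$ uniform random points $X_1, \dots, X_d$ in $K$, each point independently lands in $K \cap H^+$ or $K \cap H^-$ with probability $1/2$, and conditionally on which side it lands, it is uniformly distributed there. I would therefore condition on the sign vector $\epsilon \in \{+,-\}^d$ recording which half each $X_i$ falls into.

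The crucial step is then to reflect: for any point $y \in K \cap H^-$, its mirror image $\sigma(y) = 2x - y$ lies in $K \cap H^+$, and because $x$ is one vertex of the simplex, the reflection through $x$ maps the simplex $\conv(x, y_1, \dots, y_d)$ onto $\conv(x, \sigma(y_1), \dots, \sigma(y_d))$, which has exactly the same volume. Thus I can reflect every point that landed in $H^-$ back into $H^+$ without changing the volume $V_{K,x}$. After this reflection, all $d$ points are uniform in $K \cap H^+$, and the conditional distribution of $V_{K,x}^k$ given any sign vector $\epsilon$ equals the distribution of $V_{K \cap H^+, x}^k$. Averaging over the $2^d$ equally likely sign vectors then gives $\E V_{K,x}^k = \E V_{K \cap H^+, x}^k$, which is the claim.

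I would carry this out in the following order. First I would fix coordinates so that $x = o$, write $\sigma(y) = -y$, and record that $\vol \conv(o, y_1, \dots, y_d) = \tfrac{1}{d!}|\det(y_1, \dots, y_d)|$, from which volume-invariance under sign flips $y_i \mapsto -y_i$ is immediate. Second, I would make precise the conditioning: letting $U_1, \dots, U_d$ be i.i.d.\ uniform on $K$, I write $\E V_{K,o}^k$ as a sum over $\epsilon \in \{+,-\}^d$ of $2^{-d}$ times the conditional expectation given $\{U_i \in K \cap H^{\epsilon_i}\}$, using that $\vol(K \cap H^+) = \vol(K \cap H^-) = \tfrac12 \vol K$. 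Third, in each conditional term I apply the reflection to the coordinates with $\epsilon_i = -$, using independence and the pushforward of the uniform distribution under $\sigma$, to rewrite each term as $\E V_{K \cap H^+, o}^k$.

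The main obstacle, and the reason the authors note that a rigorous proof was missing, is handling the measure-theoretic boundary terms cleanly: the hyperplane $\bd H^+$ has Lebesgue measure zero, so the events $\{U_i \in \bd H^+\}$ occur with probability zero and the partition into $H^+$ and $H^-$ is valid up to a null set, but this needs to be stated carefully so that the conditioning and the bijection $\sigma$ between the two halves are genuinely measure-preserving. I also need to confirm that the central symmetry of $K$ about $x$ indeed forces $\vol(K \cap H^+) = \vol(K \cap H^-)$ for an \emph{arbitrary} half-space through $x$ — this follows because $\sigma$ is a volume-preserving involution swapping the two halves — so that the two halves are genuinely equiprobable. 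Once the null sets and the equal-volume claim are dispatched, the reflection argument itself is short and the averaging is a routine application of the law of total expectation.
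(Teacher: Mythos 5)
Your proposal is correct and is essentially the paper's own argument run in the reverse direction: the paper starts from $\E V_{K\cap H^+,x}^k$, averages over all $2^d$ sign flips of the columns using that $|\det|$ is even in each argument, and reassembles the integral over $K^d$, whereas you decompose $\E V_{K,x}^k$ by conditioning on the sign vector and reflect each $H^-$-point back. The key ingredients — the determinant formula giving invariance under flipping any subset of the points through $x$, the volume-preserving bijection between the two halves, and $\vol(K\cap H^+)=\tfrac12\vol K$ — are identical in both.
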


\begin{proof}
	Without loss of generality, we identify $x$ with the origin $o$. Furthermore, we denote the intersection of $K$ with the half-space $H^+$ by $K^+$. Since the volume of the simplex $\conv(o,x_1,\ldots, x_d)$ is just the absolute value of the determinant of the matrix containing the vectors $x_i$, divided by $d!$, it holds:
	\begin{align*}
	&\E V_{K^+,o}^k = \frac{1}{d!}\,\E_{X_i\in K^+} |\det(X_1,\ldots, X_d)|^k.
	\end{align*}
	
Assume that $\epsilon_i \in \{\pm 1\}$ for $i=1, \dots d$. Because the absolute value of the determinant is an even function, 
$|\det(X_1,\ldots, X_d)|=|\det(\epsilon_1 X_1,\ldots, \epsilon_d X_d)|$ for any choice of $\epsilon_i$. It follows, summing over all possible combinations of signs,	
	\begin{align*}
	\E V_{K^+,o} ^k %= \frac{1}{2^d\, d!}\,\sum_{\epsilon_i=\pm 1} \E_{X_i\in K^+} |\det(X_1,\ldots, X_d)|^k \\ &\hspace{1cm}= 
	= &\frac{1}{2^d\, d!}\,\sum_{\epsilon_i=\pm 1} \E_{X_i\in K^+} |\det(\epsilon_1 X_1,\ldots, \epsilon_d X_d)|^k \\
	= & \frac{1}{2^d\, d!} \frac{1}{(\vol K^+)^d}\,\sum_{\epsilon_i=\pm 1} \int\limits_{(K^+)^d} |\det(\epsilon_1 x_1,\ldots, \epsilon_d x_d)|^k\; d(x_1,\ldots,x_d). 
	\end{align*}
	Since the reflection of each point in $K^+$ lies in $K\setminus K^+$, in fact we integrate over all $d$-tuples of points lying in $K$. Because, due to symmetry, the volume of $K^+$ is just half of the volume of $K$, we get
	\begin{align*}
	\E V_{K^+,o}^k 
	= & \frac{1}{2^d\, d!} \frac{1}{(\vol K^+)^d}\, \int\limits_{K^d} |\det(x_1,\ldots,x_d)|^k\; d(x_1,\ldots,x_d)  \\
	= &\frac{1}{d!}  \,\E_{X_i\in K} |\det(X_1,\ldots,X_d)|^k  
	=  \E V_{K,o}^k.
	\end{align*}	
\end{proof}

Let $H^+$ be any halfspace containing the origin in its boundary. Denote by $B_d^+= B_d \cap H^+$ half of the $d$-dimensional unit ball. Using the lemma above, we immediately see that for any $d, k\in \N$ we have
	\begin{equation}\label{eq:halfball}
\E V_{B_d^+,o}^k = \frac{1}{(d!)^k} \left(\frac{\kappa_{d+k}}{\kappa_d}\right)^{d} \frac{\omega_1 \cdots \omega_k}{\omega_{d+1} \cdots \omega_{d+k}}.	 
	\end{equation}
For evaluation of the occuring expressions --- and in particular of the volume of the unit ball ---, the following estimates are useful.
\begin{lemma}[Borgwardt~\cite{bor}, also cf. \cite{rad}]\label{borg}
	For $d \geq 2$, we have 
	$$\sqrt{\frac{d}{2\pi}} \leq \frac{\kappa_{d-1}}{\kappa_d} \leq \sqrt{\frac{d+1}{2\pi}}.$$
\end{lemma}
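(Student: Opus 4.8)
The plan is to set $r_d := \kappa_{d-1}/\kappa_d$ and to sandwich it between $\sqrt{d/(2\pi)}$ and $\sqrt{(d+1)/(2\pi)}$ by combining two facts: an exact product identity $r_{d-1}r_d = d/(2\pi)$, and the monotonicity $r_{d-1}\le r_d\le r_{d+1}$. Writing $\kappa_d = \pi^{d/2}/\Gamma(1+d/2)$ gives $r_d = \pi^{-1/2}\,\Gamma((d+2)/2)/\Gamma((d+1)/2)$, so both ingredients are statements about ratios of gamma values at half-integer steps.

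First I would establish the product identity. From the explicit formula, $\kappa_d/\kappa_{d-2} = \pi\,\Gamma(d/2)/\Gamma(1+d/2) = 2\pi/d$ for $d\ge 2$, since $\Gamma(1+d/2)=(d/2)\Gamma(d/2)$. Telescoping, $r_{d-1}r_d = (\kappa_{d-2}/\kappa_{d-1})(\kappa_{d-1}/\kappa_d) = \kappa_{d-2}/\kappa_d = d/(2\pi)$; likewise $r_d r_{d+1} = (d+1)/(2\pi)$. This part is routine algebra with the volume formula.

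Next I would prove that $r$ is increasing. With $x=(d+1)/2$ one has $r_d = \pi^{-1/2}\Gamma(x+\tfrac12)/\Gamma(x)$ and $r_{d+1}=\pi^{-1/2}\Gamma(x+1)/\Gamma(x+\tfrac12)$, so $r_d\le r_{d+1}$ is equivalent to $\Gamma(x+\tfrac12)^2\le\Gamma(x)\Gamma(x+1)$. Since $x+\tfrac12 = \tfrac12(x+(x+1))$, this is precisely the log-convexity of $\Gamma$, which follows from the Cauchy--Schwarz inequality applied to $\Gamma(a)=\int_0^\infty t^{a-1}e^{-t}\,dt$. The same inequality with $x$ shifted by $\tfrac12$ gives $r_{d-1}\le r_d$.

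Finally I would combine the two ingredients: monotonicity converts the product identities into square-root bounds. From $r_d\le r_{d+1}$ we get $r_d^2\le r_d r_{d+1} = (d+1)/(2\pi)$, hence $r_d\le\sqrt{(d+1)/(2\pi)}$; from $r_{d-1}\le r_d$ we get $r_d^2\ge r_{d-1}r_d = d/(2\pi)$, hence $r_d\ge\sqrt{d/(2\pi)}$, valid for $d\ge2$ so that $r_{d-1}$ is defined (with $\kappa_0=1$). The only analytic input is the log-convexity of $\Gamma$; the genuine idea, and the step I would flag as the crux, is recognizing that the product of two consecutive ratios telescopes to the clean value $d/(2\pi)$, so that a soft monotonicity statement yields the sharp two-sided bound with no delicate estimation.
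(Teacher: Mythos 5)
Your proof is correct and complete. Note that the paper does not actually prove Lemma~\ref{borg} --- it is quoted from Borgwardt \cite{bor} (see also Rademacher \cite{rad}) --- so there is no internal argument to compare yours against; what you have written is the standard self-contained derivation of these bounds. Each step checks out: the identity $\kappa_{d-2}/\kappa_d = d/(2\pi)$ follows from $\Gamma(1+d/2)=(d/2)\Gamma(d/2)$, so the products telescope to $r_{d-1}r_d = d/(2\pi)$ and $r_d r_{d+1}=(d+1)/(2\pi)$; the monotonicity $r_{d-1}\le r_d \le r_{d+1}$ is precisely log-convexity of $\Gamma$ at the midpoints $(d+1)/2$ and $(d+2)/2$, which your Cauchy--Schwarz argument on $\Gamma(a)=\int_0^\infty t^{a-1}e^{-t}\,dt$ establishes; and multiplying the monotonicity through by $r_d>0$ turns the two product identities into the two one-sided bounds, with the hypothesis $d\ge 2$ ensuring $r_{d-1}=\kappa_{d-2}/\kappa_{d-1}$ is defined. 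You have correctly identified the crux: soft monotonicity plus the exact telescoping value yields the sharp two-sided estimate with no asymptotic analysis of the gamma function.
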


\section{Random triangles in a triangle}
Essential for our investigations in the planar case is the expected area of a random triangle in a given triangle $T$. In particular, we need the expected area in the case where one point is fixed at the midpoint of an edge. This is the statement of Theorem \ref{triangle}, which is proved in this section.

Let $x$ be the midpoint of an edge of $T$. We show that 
	$$\frac{\E V_{T,x}^k}{\vol T^k} = \frac{2^{3-k}}{(k+1)(k+2)^2(k+3)} \left(\sum\limits_{l=1}^{k+1} \binom{k+2}{l}^{-1} + 1\right).$$

\begin{proof}
	Since the moments of the volume of the random triangle do not depend on the shape of the triangle, we can consider the specific triangle 
	$$T = \{(x,y)\in \mathbb{R}^2: x,y \geq 0, x+y\leq 1\},$$ 
	i.e. the triangle with vertices $E_0=(0,0), E_1=(1,0)$ and $E_2=(0,1)$. Note that its area is $\vol T =1/2$. We choose $x=(1/2,1/2)$, the midpoint of the edge $\{(x,y)\in\mathbb{R}^2: x,y\geq 0, x+y=1\}$, to be the fixed vertex of the random triangle. 
	
	Using the affine Blaschke-Petkantschin formula --- see e.g. \cite{schWeil} ---, we transform our integral and integrate over all lines $H$ intersecting the triangle.	
	\begin{align*}
	\E V_{T,x}^k & = \frac{1}{\vol T^2} \int\limits_{T^2} \vol\conv(x,x_1,x_2)^k\; d(x_1,x_2)  \\
	&= 4\int\limits_{A(2,1)} \int\limits_{(H\cap T)^2} \vol\conv(x,x_1,x_2)^k\ ||x_1-x_2||\; d(x_1,x_2)\; dH,
	\end{align*}
	where $A(2,1)$ denotes the affine Grassmannian of lines in $\mathbb{R}^2$. We represent a line $H$ by its unit normal vector $u \in S_1$ and its distance $t>0$ from the origin and we therefore denote the line by 
	$$H_{t,u} = \{x \in \mathbb{R}^2: \langle x,u\rangle = t\}.$$ 
	We choose the normalization of the Haar measure $dH$ in such a way that $dH = dt\; du,$
	where $dt$ and $du$ correspond to Lebesgue measures in $\mathbb{R}$ and $S_1$. The area of the triangle $\conv(x,x_1,x_2)$ is the product of the length $||x_1-x_2||$ of its base and its height $d(H_{t,u},x)$, divided by $2$. We write the appearing integral as an expectation to get 
	\begin{align*}
	\E V_{T,x}^k &= 4 \int\limits_{S_1} \int\limits_0^{\infty} \frac{d(H_{t,u},x)^k}{2^k} \int\limits_{(H_{t,u}\cap T)^2} ||x_1-x_2||^{k+1}\; d(x_1,x_2)\; dt\; du\\
%	&= 4 \int\limits_{S_1} \int\limits_0^{\infty} \frac{d(H_{t,u},x)^k}{2^k} \vol_1(H_{t,u}\cap T)^2\; \E_{X_i \in H_{t,u}\cap T} ||X_1-X_2||^{k+1}\; dt\; du.
	&= 4 \int\limits_{S_1} \int\limits_0^{\infty} \frac{d(H_{t,u},x)^k}{2^k} \vol(H_{t,u}\cap T)^2\; \E V_{H_{t,u}\cap T}^{k+1}\  dt\; du.
	\end{align*}
The $(k+1)$-st moment of the distance of two random points in the intersection $H_{t,u}\cap T$ has already been given in  Lemma~\ref{solomon}.
%is given by $$\E V_{H_{t,u}\cap T}^{k+1} = \frac{2 \vol_1(H_{t,u}\cap T)^{k+1}}{(k+2)(k+3)}.$$
Hence, we obtain
	\begin{align*}
	\E V_{T,x}^k &= \frac{2^{3-k}}{(k+2)(k+3)} \int\limits_{S_1} \int\limits_0^{\infty} d(H_{t,u},x)^k \vol(H_{t,u}\cap T)^{k+3}\; dt\; du.
	\end{align*}
	A line $H_{t,u}$ that intersects the triangle $T$ a.s. meets exactly two edges of $T$. It splits $T$ into a triangle and a quadrangle. We say that $H_{t,u}$ cuts off the vertex $E_i$ from $T$ if $E_i$ is contained in the triangular part. Furthermore, we write 
	$$\mathcal I^{(i)} = \int\limits_{S_1} \int\limits_0^{\infty} \1(H_{t,u} \text{ cuts off } E_i \text{ from } T)\; d(H_{t,u},x)^k \vol(H_{t,u}\cap T)^{k+3}\; dt\; du$$
	for $i = 0,1,2$, which gives 
	$$ \E V_{T,x}^k  =  \frac{2^{3-k}}{(k+2)(k+3)} \Big( \mathcal I^{(0)}  + \mathcal I^{(1)}  + \mathcal I^{(2)}   \Big). $$
	We state the following lemma which will be proved right after the end of the proof of this proposition:
	
	\begin{lemma}\label{help}
		It holds:
		\begin{enumerate}
			\item[(i)] $\mathcal I^{(0)} = \frac{1}{2^k (k+1)(k+2)} \sum\limits_{l=1}^{k+1} \binom{k+2}{l}^{-1},$
			\item[(ii)] $\mathcal I^{(1)} = \mathcal I^{(2)} = \frac{1}{2^{k+1} (k+1)(k+2)}.$
		\end{enumerate}
	\end{lemma}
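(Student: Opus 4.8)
My plan is to evaluate each $\mathcal I^{(i)}$ by a change of variables in the line integral, passing from the line coordinates $(t,u)$ to the two points in which $H_{t,u}$ meets the two edges of $T$ through $E_i$. By translation invariance of the measure $dt\,du$ I may place the relevant vertex $V=E_i$ at the origin, with the two incident edges carrying unit directions that enclose an angle $\gamma$. A line cutting off $V$ then meets these edges in $P=p\,e_1$ and $Q=q\,e_2$ with $p,q>0$ the distances from $V$, chord length $L=\|P-Q\|$ and $L^2=p^2+q^2-2pq\cos\gamma$. Writing $u=(\cos\theta,\sin\theta)$ so that $du=d\theta$, and reading off $u=(P-Q)^{\perp}/L$ and $t=P\cdot u=pq\sin\gamma/L$, I would compute the $2\times2$ determinant of $(t,\theta)$ with respect to $(p,q)$ and obtain the uniform change-of-variables formula
\[
dt\,du=\frac{pq\,\sin^{2}\gamma}{L^{3}}\,dp\,dq .
\]
This single computation (for $E_0$ the right angle $\gamma=\pi/2$ reduces it to $pq/(p^{2}+q^{2})^{3/2}$) serves all three vertices and is the structural heart of the argument.

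Next I would record the distance of the fixed midpoint $x=(1/2,1/2)$ to the line in the new coordinates; in every case it takes the form $d(H_{t,u},x)=D_i(p,q)/L$ with an explicit polynomial numerator. For the right-angle vertex $E_0$ (legs on the axes, $P=(a,0)$, $Q=(0,b)$, $a,b\in(0,1)$) one gets $D_0=\tfrac12(a+b-2ab)$; for $E_1$ (and by reflection $E_2$) the point $x$ lies on one of the two edges through the vertex, so $D_1$ vanishes exactly when the chord passes through $x$. Since $\vol(H_{t,u}\cap T)=L$, the integrand of $\mathcal I^{(i)}$ becomes
\[
\frac{D_i^{k}}{L^{k}}\cdot L^{k+3}\cdot\frac{pq\sin^{2}\gamma}{L^{3}}=\sin^{2}\gamma\,D_i(p,q)^{k}\,pq ,
\]
so that all powers of $L$ cancel and each $\mathcal I^{(i)}=\sin^{2}\gamma\iint D_i(p,q)^{k}\,pq\,dp\,dq$ collapses to an elementary polynomial integral over a rectangle of edge-parameters.

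The main obstacle is producing part (i) in the precise binomial form. Here $\sin^2\gamma=1$ and the cancellation gives $\mathcal I^{(0)}=2^{-k}\int_0^1\!\int_0^1 (a+b-2ab)^{k}\,ab\,da\,db$. The device I would use is the identity $a+b-2ab=a(1-b)+b(1-a)$, which converts the binomial theorem into a sum over $l$ of products of one-dimensional Beta integrals,
\[
\int_0^1 a^{l+1}(1-a)^{k-l}\,da\cdot\int_0^1 b^{k-l+1}(1-b)^{l}\,db=\frac{(l+1)!\,(k-l)!}{(k+2)!}\cdot\frac{(k-l+1)!\,l!}{(k+2)!}.
\]
Multiplying by $\binom{k}{l}$ and simplifying the factorials, every summand reduces to $\frac{k!}{(k+2)!}\binom{k+2}{l+1}^{-1}$; the crucial recognition is $(l+1)!\,(k+1-l)!=(k+2)!\binom{k+2}{l+1}^{-1}$, which is the step that has to be gotten right. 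Summing over $l=0,\dots,k$ and reindexing $l\mapsto l+1$ yields exactly $\frac{1}{(k+1)(k+2)}\sum_{l=1}^{k+1}\binom{k+2}{l}^{-1}$, giving part (i) after the factor $2^{-k}$.

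Finally, part (ii) is easier. The reflection $(x,y)\mapsto(y,x)$ fixes $T$ and $x$ and interchanges $E_1,E_2$, so $\mathcal I^{(1)}=\mathcal I^{(2)}$ and only $\mathcal I^{(1)}$ must be computed. For this vertex $\sin^{2}\gamma=\tfrac12$ and $D_1=\tfrac12\,p\,|1-\sqrt2\,q|$, with $p\in(0,1)$ along the leg and $q\in(0,\sqrt2)$ along the hypotenuse. After rescaling the hypotenuse parameter to unit length (whose Jacobian cancels the factor $\sin^2\gamma=\tfrac12$) the integral separates,
\[
\mathcal I^{(1)}=2^{-k}\left(\int_0^1 p^{k+1}\,dp\right)\left(\int_0^1 |1-2r|^{k}\,r\,dr\right),
\]
where the first factor is $\frac{1}{k+2}$ and the second, split at $r=\tfrac12$ (or symmetrized about it), equals $\frac{1}{2(k+1)}$. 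This gives $\mathcal I^{(1)}=\mathcal I^{(2)}=\frac{1}{2^{k+1}(k+1)(k+2)}$, as claimed.
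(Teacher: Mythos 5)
Your proposal is correct, and its core is the same as the paper's: parametrize the lines cutting off a vertex by their two edge intersections so that every power of the chord length cancels, then evaluate an elementary polynomial integral; for (i) you use exactly the paper's device $a+b-2ab=a(1-b)+b(1-a)$ followed by the same Beta-integral and binomial simplification. The genuine difference lies in how the change of variables is organized. The paper reaches the intercept form $dt\,du=ab\,(a^2+b^2)^{-3/2}\,da\,db$ via the explicit substitutions $z=u/t$, $a=z_1^{-1}$, $b=z_2^{-1}$, and then, for $\mathcal I^{(1)}$, appeals to an affine transformation carrying the cut-off vertex to the right angle, computing with $h(a,b)=|b-2ab|/(2\sqrt{a^2+b^2})$ and splitting the integral at $a=1/2$. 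You instead invoke the classical chord density $\frac{\sin\theta_1\sin\theta_2}{L}\,dt_1\,dt_2=\frac{pq\sin^2\gamma}{L^3}\,dp\,dq$ directly at each vertex; this treats all three vertices uniformly, dispenses with the affine-invariance step (which the paper justifies only loosely), and turns $\mathcal I^{(1)}$ into a clean product of two one-dimensional integrals, $\frac{1}{k+2}$ and $\frac{1}{2(k+1)}$, the latter splitting at $r=1/2$ into the two contributions that correspond to the paper's pieces $\frac{1}{2^{k+2}(k+1)(k+2)^2}$ and $\frac{2k+3}{2^{k+2}(k+1)(k+2)^2}$. Your formulas $D_0=\frac12(a+b-2ab)$ and $D_1=\frac12\,p\,|1-\sqrt2\,q|$ are correct, and both routes yield the stated constants. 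The only step you leave as a sketch is the verification of the Jacobian $pq\sin^2\gamma/L^3$, but this is a standard integral-geometric computation (and it specializes at the right angle to the paper's $ab\,(a^2+b^2)^{-3/2}$), so there is no real gap.
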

	\noindent
	Utilizing this lemma, we get
	$$
	\E V_{T,x}^k =  \frac{2^{3-2k}}{(k+1)(k+2)^2(k+3)} \left(\sum\limits_{l=1}^{k+1} \binom{k+2}{l}^{-1}+1\right).
	$$
\end{proof}
	
	\noindent \textit{Proof of Lemma~\ref{help}.}
		We start with the computation of $\mathcal I^{(0)}$. We subsitute $z = u/t$ and get with $H_{z} = \{x \in \mathbb{R}^2: \langle x,z\rangle = 1\}$ and $dt\; du = |z|^{-3}\; dz$ that
		\begin{align*}
		\mathcal I^{(0)} &= \int\limits_{\mathbb{R}^2} \1(H_z \text{ cuts off } E_0 \text{ from } T)\; d(H_z,x)^k \vol(H_{z}\cap T)^{k+3} |z|^{-3}\; dz.
		\end{align*}
	With a second substitution by $a = z_1^{-1}, b = z_2^{-1}$, we get by $a$ the abscissa of the point of intersection of a line with the $x$-axis and by $b$ the ordinate of the intersection of the line with the $y$-axis. We write $H_{a,b}$ for the line represented by the parameters $a$ and $b$ and have $H_{a,b} = \{x \in \mathbb{R}^2: \langle x,(a^{-1}, b^{-1})\rangle = 1\}$ and $dz = da\; db/(a^2 b^2)$. Considering the appearing indicator function, we see that a line $H_{a,b}$ cuts off $E_0$ from $T$ --- or, in other words, intersects both catheti of $T$ --- if and only if $a$ and $b$ both lie between $0$ and $1$. Our integral consequently transforms into 	
		\begin{align*}
		\mathcal I^{(0)} &= \int\limits_0^1 \int\limits_0^1 \frac{ d(H_{a,b},x)^k \vol(H_{a,b}\cap T)^{k+3}}{(a^2 + b^2)^{3/2}}\  ab \; da\; db.
		\end{align*}

		We use the notation $l(a,b)=\vol (H_{a,b}\cap T)$ for the length of the intersection of $T$ with a line $H_{a,b}$, and $h(a,b)=d(H_{a,b},x)$ for the distance of this line from $x$. We get by easy computations that
		$$l(a,b) = \sqrt{a^2 + b^2} \ \text{ and } \ h(a,b) = (a+b-2ab)/(2\sqrt{a^2+b^2}).$$ 		
		which yields
		\begin{align*}
		\mathcal I^{(0)} &= \int\limits_0^{1} \int\limits_0^{1}  \frac{l(a,b)^{k+3} h(a,b)^k}{(a^2 + b^2)^{3/2}}\  ab \; da\; db 
		= \frac{1}{2^k} \int\limits_0^{1} \int\limits_0^{1} (a+b-2ab)^k ab \; da\; db\\
		&= \frac{1}{2^k} \int\limits_0^{1} \int\limits_0^{1} ((a(1-b) +b (1-a)))^k ab \; da\; db\\
		&= \frac{1}{2^{k}}\sum_{l=0}^k \binom k{l} \int\limits_0^{1} \int\limits_0^{1} a^{l+1} (1-a)^{k-l} b^{k-l+1} (1-b)^l \; da\; db\\
		&= \frac{1}{2^{k}}\sum_{l=0}^k \binom k{l}  \frac{(l+1)!(k-l)!}{(k+2)!} \frac{l!(k-l+1)!}{(k+2)!} \\
%		&= \frac{1}{2^{k}}\sum_{l=1}^{k+1} \frac{k!l!(k-l+2)!}{(k+2)!^2}  \\
		&= \frac{1}{2^{k} (k+1)(k+2)} \sum_{l=1}^{k+1} {\binom {k+2}l }^{-1}, %BR: Beistrich  
		\end{align*}
		 and we arrive at the expression stated in (i).
	
		Considering statement (ii), we first note that $\mathcal I^{(1)} = \mathcal I^{(2)}$ due to symmetry. Hence it suffices to compute $\mathcal I^{(1)}$. Furthermore, the integrals are affine invariant and we can transform the triangle into a similar one, bringing the point $x$ to the midpoint $(1/2,0)$ of the edge on the $x$-axis, and exchanging the vertices $E_i$ clockwise. Now, a line cutting off $E_1$ from $T$ intersects the triangle in both catheti and $a$ and $b$ lie between $0$ and $1$.  %BR: catheti -> cathetus
		
		The function $h(a,b)$ changes its sign at $a=1/2$. As before, we get again $l(a,b) = \sqrt{a^2 + b^2}$, and by another straightforward computation, 
		$$h(a,b) = 
		\begin{cases}
		\frac{b-2ab}{2\sqrt{a^2+b^2}} &\text{ for } 0 \leq a \leq \frac{1}{2},\\
		\frac{2ab-b}{2\sqrt{a^2+b^2}} &\text{ for } \frac{1}{2} \leq a \leq 1.
		\end{cases}$$
		The occuring double integral can be solved by partial integration.	
		\begin{align*}
		&\int\limits_0^1 \int\limits_0^{1/2} \frac{l(a,b)^{k+3} h(a,b)^k}{(a^2 + b^2)^{3/2}}\ ab\; da\; db = \frac{1}{2^k} \int\limits_{0}^{1} \int\limits_0^{1/2} (b-2ab)^k ab \; da\; db \\
		&\hspace{2cm}=  \frac{1}{2^{k+2} (k+1)(k+2)} \int\limits_{0}^{1} b^{k+1}\; db = \frac{1}{2^{k+2} (k+1)(k+2)^2}, 
		\end{align*}	
		and analogously	
		\begin{align*}
		&\int\limits_0^1 \int\limits_{1/2}^1 \frac{l(a,b)^{k+3} h(a,b)^k}{(a^2 + b^2)^{3/2}/(ab)}\; da\; db = \frac{2k+3}{2^{k+2} (k+1)(k+2)^2}.
		\end{align*} 
		A combination of both results yields the proof of statement (2).\qed

\section{Proof of Theorem 2}\label{sec:main}

For the first step of the proof, we refine Rademacher's method of proof of Theorem~\ref{main-Rademacher}.

\subsection{Random polytopes in hemispheres}
Let $B_d^+$ be half of the $d$-dimensional unit ball and $L$ the ball of volume $\vol B_d^+$. According to Theorem~\ref{bla_groe}, 
$$\E V_{B_d^+}^k > \E V_L^k,$$ %BR: Komma
and since $\E V_L^k= 2^{-k} \E V_{B_d}^k$, Theorem~\ref{ball} implies  %BR: Leerzeichen entfernt
\begin{align*}
\E V_{B_d^+}^k &> \frac{1}{2^k (d!)^k} \left(\frac{\kappa_{d+k}}{\kappa_d}\right)^{d+1} \frac{\kappa_{d(d+k+1)}}{\kappa_{(d+1)(d+k)}} \frac{\omega_1 \cdots \omega_k}{\omega_{d+1} \cdots \omega_{d+k}}.
\end{align*}
On the other hand, by equation \eqref{eq:halfball},
$$
\E V_{B_d^+,o}^k 
= \frac{1}{(d!)^k} \left(\frac{\kappa_{d+k}}{\kappa_d}\right)^{d} \frac{\omega_1 \cdots \omega_k}{\omega_{d+1} \cdots \omega_{d+k}}.
$$
Combining these statements, we get %BR: Komma
\begin{align}\label{eq:ratio}
\frac{\E V_{B_d^+,o}^k }{\E V_{B_d^+}^k} < 2^k \frac{\kappa_d}{\kappa_{d+k}}\, \frac{\kappa_{(d+1)(d+k)}}{\kappa_{d(d+k+1)}}.
\end{align}
Lemma~\ref{borg} and the inequality $a/b \leq (a+1)/(b+1)$ for $0\leq a \leq b$ yield %BR: yields -> yield 
$$
\frac{\E V_{B_d^+,o}^k }{\E V_{B_d^+}^k} < 2^k \left(\frac{(d+2)\cdots (d+k+1)}{(d(d+k+1)+1)\cdots (d(d+k+1)+k)}\right)^{\frac 12} = q(d,k)^{\frac 12}, %BR: Komma
$$
which is Equation (5) in \cite{rad}.
Now consider the series 
\begin{align*}
q(2,k) = 4^k \ \frac{4\cdots (k+3)}{(2(k+3)+1)\cdots (2(k+3)+k)},\quad k \in \N,
\end{align*}
which is strictly decreasing for $k \geq 4$. This can be shown by a computation of the ratio 
\begin{align*}
\frac{q(2,k+1)}{q(2,k)} =
% 4 \ \frac{4\cdots (k+4)}{(2(k+4)+1)\cdots (2(k+4)+k+1)}   \frac{(2(k+3)+1)\cdots (2(k+3)+k)}{4\cdots (k+3)}
\frac{ 4 (k+4) (2k+7)(2k+8)}{(3k+7)(3k+8)(3k+9)}   \, .
\end{align*}
Furthermore, $q(2,k)$ is smaller than $1$ for $k=11$, and therefore the same is true for $k \geq 11$. 
The values $k=3, \dots, 10$ remain open and will be discussed in the next subsection.

In order to solve the question in dimension $d=3$, we investigate 
\begin{align*}
\frac{q(3,k+1)}{q(3,k)} &=
 \frac{ 4 (k+5)(3k+13)\cdots (3k+15)}{(4k+13)\cdots (4k+16)}. %BR: Punkt
\end{align*}
Again, it can be shown easily that this series is strictly decreasing for $k \geq 2$, and $q(3,k)$ is smaller than $1$ for $k=4$ and therefore also for $k \geq 4$. For $k=2,3$ we directly investigate the fraction 
$$2^k \frac{\kappa_{3}}{\kappa_{k+3}}\frac{\kappa_{4(k+3)}}{\kappa_{3(k+4)}}$$ 
in formula \ref{eq:ratio} and obtain that this equals $1$ for $k=2$ and gives for $k=3$:
$$
2^3 \frac{\kappa_{3}}{\kappa_{6}}\frac{\kappa_{24}}{\kappa_{21}}
% = 2^3 \frac{3!}{3.1/2.2}{}\frac{(21.19...3.1)/(2.2...2.2)}{12!} = 2^{3 -11 + 2+1-6-4}  3.19.17.13 = 2^{-15} 12597 
= 0,384\ldots <1, %BR: Komma
$$ 
yielding the result. 
\qed

\subsection{Random polygons in a triangle}

It remains to give counterexamples for $d=2$ in the cases  $k=3, \dots, 10$. Here we prove that random triangles in triangles are suitable for our purposes by giving the explicit values.

According to Reed~\cite{reed} und Alagar~\cite{ala}, it holds for a triangle $T$ of volume one that 
\begin{align*}
&\E V_T^k = \frac{12}{(k+1)^3 (k+2)^3 (k+3)(2k+5)}\times\\
&\hspace{3cm}\times \left(6(k+1)^2 + (k+2)^2 \sum_{i=0}^k \binom{k}{i}^{-2}\right).
\end{align*}
Using Proposition~\ref{triangle} and recalling that $x$ is the midpoint of an edge of $T$, we have 
$$
\frac{\E V_{T,x}^k}{\E V_T^k} 
= \frac{(k+1)^2 (k+2)(2k+5)}{3 \cdot 2^{k-1}} \frac{\sum_{l=1}^{k+1} \binom{k+2}{l}^{-1} + 1}{6(k+1)^2 + (k+2)^2 \sum_{i=0}^k \binom{k}{i}^{-2}}.
$$
We evaluate this expression for $k = 3,\ldots,10$. 
%The according values are shown in Table~\ref{tab_triangle} and are all smaller than $1$. 
\begin{table}[h]\setlength\small
	\centering
	\begin{tabular}{|c|c|c|c|}
		\hline
		$k$ & $\E V_{T,x}^k$ & $\E V_T^k$ &\small $\E V_{T,x}^k/\E V_T^k$ \\
		\hline\hline
		$3$ & $1/375$ & $31/9\,000$ & $24/31 \approx 0.774194$ \\[1ex]
		$4$ & $13/21\,600$ & $1/900$ & $13/24 \approx 0.541667$ \\[1ex]
		$5$ & $151/987\,840$ & $1\,063/2\,469\,600$ & $755/2\,126 \approx 0.355127$ \\[1ex]
		$6$ & $1/23\,520$ & $403/2\,116\,800$ & $90/403 \approx 0.223325$ \\[1ex]
		$7$ & $83/6\,531\,840$ & $211/2\,268\,000$ & $2\,075/15\,192 \approx 0.136585$\\[1ex]
		$8$ & $73/18\,144\,000$ & $13/264\,600$ & $511/6\,240 \approx 0.081891$\\[1ex]
		$9$ & $1\,433/1073318400$ & $2\,593/93\,915\,360$ & $10\,031/207\,440 \approx 0.0483562$\\[1ex]
		$10$ & $647/1\,405\,071\,360$ & $697/42\,688\,800$ & $22\,645/802\,944 \approx 0.0282025$\\
		\hline
	\end{tabular}
	\caption{$\E V_{T,x}^k$ and $\E V_T^k$ for a triangle $T$ of volume $1$}\label{tab_triangle}
\end{table}
\\ Note that for $k=2$, we have 
$\E V_{T,x}^2/\E V_T^2 = 1 . $

\qed

\bigskip
\parindent=0pt

\bigskip
\begin{samepage}
Benjamin Reichenwallner\\
Institut f\"ur Mathematik \\
Universit\"at Salzburg\\
Hellbrunner Straße 34\\
5020 Salzburg, Austria\\
e-mail: benjamin.reichenwallner@sbg.ac.at
\end{samepage}
\bigskip

\begin{samepage}
Matthias Reitzner\\
Institut f\"ur Mathematik\\
Universit\"at  Osnabr\"uck\\
Albrechtstr. 28a \\
49076 Osnabr\"uck, Germany\\
e-mail: matthias.reitzner@uni-osnabrueck.de
\end{samepage}

\bigskip

\end{document}